\theoremstyle{defiCnition}
\newtheorem{theo}{Theorem}
\newtheorem{rema}{Remark}
\newtheorem{coro}{Corollary}
\newtheorem{lemma}{Lemma}
\def\suny{\sum_{n=1}^\infty}
\def\suzy{\sum_{k=0}^\infty}
\def\suky{\sum_{k=m}^\infty}
\def\Nint{\mathbb{N}^*}
\def\Cplx{\mathbb{C}}
\def\RE#1{\Re{e}(#1)}
\newcommand{\biindice}[3]%
{

\begin{array}[t]{c}
#1\\
{\scryptstyle #2}\\
{\scryptstyle #3}
\end{array}

}
\def\newblock {\hskip .11em plus .33em minus .07em}
\author{Claude Henri Picard}
\address{58 avenue de Mortières\\ 71640 Givry, France}
\email{claudeh5@free.fr}
\date{28 may 2015}
\title{On some series formed by values of the Riemann Zeta function}
\begin{document}

\begin{abstract}
The partial fraction expansion of $\coth(\pi z)$, due to Euler, is generalized
to power series having for coefficients the Riemann zeta function evaluated
at certain arithmetic sequences.
A further generalization using arbitrary Dirichlet series is also proposed.
The resulting formulas are new, as far as we know, since they could not be found in any of the classical
or recent handbooks of formulas that were at our disposal.
\end{abstract}

\maketitle

\section{Origin of the series}
The starting point for this paper is the known formula \cite[p. 49]{Valiron:1942}:
$$
	\suny \frac1{n^2+z^2} = \begin{cases}
		\displaystyle \frac{\pi^2}{6} & \text{if }z=0,
		\\
		\displaystyle \frac{\pi}{2z} \coth (\pi z) - \frac1{2z^2} & \text{if }0<|z|<1.
					\end{cases}
$$
These two values must obviously be connected. Indeed, from the Taylor series for $z=0$, we obtain the expansion
$$
	\suny \frac1{n^2+z^2} = \frac{\pi^2}{6} - \frac{\pi^4}{90}z^2 + \frac{\pi^6}{945}z^4 + O(z^6),
		\qquad(z\to0),
$$
where the classical values of the Riemann's zeta function for the even integers can be
recognized immediately. The same question can now be asked for similar partial fraction expansions such as
$$
	\suny \frac1{n^3+z^3}, \quad\suny \frac1{n^4+z^4}, \quad\ldots
$$

Our results presented below extend significantly some particular formulas published recently in \cite{ChoiSrivastava:1997} and \cite{Jameson:2014} (although it was written before knowing these papers), notably by considering
Cesàro summation and more general Dirichlet series.

\section{Formula of order $p$}
\begin{theo} 
For a complex $z$ of modulus less or equal to $1$, $z \neq -1$:
\begin{equation} \label{n2plusz}
	\suny {\frac1{n^2+z}} = \suzy {(-1)^k \zeta(2k+2) z^{k}},
\end{equation}
\begin{equation} \label{n3plusz}
	\suny {\frac1{n^3+z}} = \suzy {(-1)^k \zeta(3k+3) z^{k}},
\end{equation}

and generally for $p\in\Cplx$ such that $\RE{p}>1$ ({\it formula of order $p$})~:

For all complex numbers $z$ of modulus less or equal to $1$, $z \neq -1$:

\begin{equation} \label{npplusz}
	\suny {\frac1{n^p+z}} = \suzy {(-1)^k \zeta(pk+p) z^{k}}.
\end{equation}
\end{theo}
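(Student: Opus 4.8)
The plan is to expand each summand on the left into a geometric series in $z$ and to interchange the two summations, the interchange being legitimate by absolute convergence once the single bad term is set aside. Write $\sigma=\RE p>1$. First I note that the left-hand side is well defined on $\{|z|\le1\}\setminus\{-1\}$: for $n\ge1$ one has $|n^{p}+z|\ge n^{\sigma}-|z|\ge n^{\sigma}-1$, which vanishes only for $n=1$, $z=-1$, while for $n\ge2$ one has $|n^{p}+z|\ge\tfrac12 n^{\sigma}$; hence $\suny 1/(n^{p}+z)$ converges normally away from its poles $\{-n^{p}:n\ge1\}$, and all of those but $-1$ lie outside the closed unit disc ($|-n^{p}|=n^{\sigma}\ge2^{\sigma}>2$ for $n\ge2$), so the sum is holomorphic — in particular continuous — on a neighbourhood of $\{|z|\le1,\ z\ne-1\}$. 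I would then isolate the first term,
\[
\suny\frac{1}{n^{p}+z}=\frac{1}{1+z}+\sum_{n=2}^{\infty}\frac{1}{n^{p}+z},
\]
which confines every convergence difficulty to one explicit geometric series.

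For $n\ge2$ we have $|z/n^{p}|\le 2^{-\sigma}<1$ uniformly on $|z|\le1$, so I would write $\dfrac{1}{n^{p}+z}=\dfrac{1}{n^{p}}\cdot\dfrac{1}{1+z/n^{p}}=\sum_{k=0}^{\infty}(-1)^{k}z^{k}n^{-p(k+1)}$ and apply Fubini's theorem, which is licit because the double series of absolute values,
\[
\sum_{n=2}^{\infty}\sum_{k=0}^{\infty}|z|^{k}n^{-\sigma(k+1)}=\sum_{n=2}^{\infty}\frac{n^{-\sigma}}{1-n^{-\sigma}}\le\frac{1}{1-2^{-\sigma}}\sum_{n=2}^{\infty}n^{-\sigma},
\]
is finite for every $|z|\le1$ (using $\sigma>1$). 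This yields, on the whole closed disc,
\[
\sum_{n=2}^{\infty}\frac{1}{n^{p}+z}=\suzy(-1)^{k}z^{k}\sum_{n=2}^{\infty}n^{-p(k+1)}=\suzy(-1)^{k}\bigl(\zeta(pk+p)-1\bigr)z^{k}.
\]
Adding $\dfrac{1}{1+z}=\suzy(-1)^{k}z^{k}$, valid for $|z|<1$, gives $\suny 1/(n^{p}+z)=\suzy(-1)^{k}\zeta(pk+p)z^{k}$ for $|z|<1$, which is \eqref{npplusz}; the identities \eqref{n2plusz} and \eqref{n3plusz} are the cases $p=2$ and $p=3$.

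The step I expect to be the main obstacle is the boundary $|z|=1$. There $\bigl|(-1)^{k}\zeta(pk+p)z^{k}\bigr|=|\zeta(pk+p)|\to1$, so the series on the right of \eqref{npplusz} does not converge in the ordinary sense, and for $|z|=1$, $z\ne-1$, the identity has to be read with that series summed by a regular summation method — the Cesàro (equivalently, here, Abel) summation to which the introduction alludes. Granted this, nothing changes except the treatment of $\suzy(-1)^{k}z^{k}$: for $|z|=1$, $z\ne-1$ its partial sums equal $\dfrac{1-(-z)^{N+1}}{1+z}$, and a one-line estimate of their Cesàro means (or of $\sum_{k\ge0}(-rz)^{k}$ as $r\uparrow1$) shows it is $(C,1)$- and Abel-summable to $\dfrac{1}{1+z}$, while $\suzy(-1)^{k}(\zeta(pk+p)-1)z^{k}$ still converges absolutely; summing the two recovers the asserted value. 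Equivalently, by the continuity noted in the first paragraph, $\suny 1/(n^{p}+z)=\lim_{r\uparrow1}\suny 1/(n^{p}+rz)=\lim_{r\uparrow1}\suzy(-1)^{k}\zeta(pk+p)(rz)^{k}$, and the last member is exactly the Abel mean of the right-hand series at $z$. Apart from this boundary bookkeeping, the only genuine analytic content is the Fubini interchange for $n\ge2$, which is immediate from $\sigma>1$.
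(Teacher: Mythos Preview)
Your argument is correct and shares the paper's core idea---expand $1/(n^{p}+z)$ as a geometric series in $z$ and interchange the two summations by absolute convergence---but your organisation differs in a useful way. The paper first restricts to $|z|<r_{0}<1$ so that the geometric expansion is valid for \emph{every} $n\ge1$, interchanges by normal convergence, and then invokes Cauchy--Hadamard to push the identity out to the open disc; in its proof of this theorem it does not explicitly treat the boundary $|z|=1$ (Abel's lemma is only brought in later, in the proof of Theorem~\ref{thdirichlet}). Your device of splitting off the $n=1$ term buys uniform absolute convergence of the $n\ge2$ piece on the whole closed disc, so that all the boundary difficulty is concentrated in the single geometric series $\sum_{k}(-z)^{k}$, whose Abel and Ces\`aro sums at $|z|=1$, $z\ne-1$ you compute directly. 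This makes the extension to the boundary transparent and, in fact, more complete than the paper's own argument here; your observation that $\zeta(pk+p)\to1$ forces a summability interpretation on the circle is exactly the point the applications in Section~8 rely on.
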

\begin{proof} We prove only the formula of order $p$, since the first two formulas are  special cases.
Let us first assume $|z|<r_0<1$. This condition implies the normal convergence\footnote{   \cite[p. 124]{Bromwich:1926}
``Series wich satisfy the M-test have been called normally convergent by Baire."} of the series and hence the legitimate inversion of the summations
\cite[p. 104]{Remmert:1998}.

We have
$$\suny \frac1{n^p+z^p} =  \suny \frac1{n^p}\frac1{1+(z/n)^p} = \suny \frac1{n^p} \suzy (-1)^k \left(\frac{z}{n}\right)^{kp}$$ 
and we obtain, with the inversion of summations,
$$\suny \frac1{n^p+z^p} =\suzy (-1)^k z^{pk} \suny \frac1{n^{p(k+1)}}=\suzy {(-1)^k \zeta(pk+p) z^{pk}}$$
Substituting $z$ to $z^p$ yields the formula of order $p$.
We thus have an analytical function of $z$ in $D(0,r_0)$ which can be extended to the whole open disc $D(0,1)$ because the convergence radius is $1$ from the Cauchy-Hadamard formula.
\end{proof}

On the convergence circle, the analytical function 
$$
	f_p(z)=\suzy {(-1)^k \zeta(pk+p) z^{pk}}
$$
has a pole at $z=-1$. From the formula, the only problematic term in the left hand side is the first term whose denominator is zero if $z=-1$.
We conclude that the function $f_p(z)$ has a simple pole on the convergence circle 
$|z|=1$ at $z=-1$.

\begin{theo} The function $f_p(z)$ is meromorphic on $\mathbb{C}$, and its singularities are an infinity of simple poles at $z=-n^p$ ($n \in \Nint$) with residue $1$.
\end{theo}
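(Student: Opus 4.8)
The plan is to recognise $f_p$ as the globally defined series $g(z):=\suny\frac1{n^p+z}$ and to read its poles off directly. First I would record the basic facts about the prospective poles: since $n$ is a positive integer, $n^p:=e^{p\log n}$ (real logarithm) is unambiguously defined, and $|n^p|=n^{\RE{p}}$ is strictly increasing with $n\to\infty$ because $\RE{p}>1$; hence the points $-n^p$ ($n\in\Nint$) are pairwise distinct and accumulate only at $\infty$. Next, on any compact set $K\subset\Omega:=\Cplx\setminus\{-n^p:n\in\Nint\}$ I would pick $N$ with $N^{\RE{p}}>2\sup_{z\in K}|z|$, so that $|n^p+z|\ge n^{\RE{p}}-|z|\ge\tfrac12 n^{\RE{p}}$ for $n\ge N$ and $z\in K$; then $\sum_{n\ge N}|n^p+z|^{-1}\le 2\sum_{n\ge N}n^{-\RE{p}}<\infty$, i.e.\ the tail is normally convergent on $K$, while the remaining finitely many terms are holomorphic near $K$. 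By the Weierstrass convergence theorem $g$ is holomorphic on $\Omega$.

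Then I would analyse $g$ at a point $-m^p$: writing $g(z)=\dfrac1{z+m^p}+\sum_{n\ne m}\dfrac1{n^p+z}$, the second sum is (by the same estimate with the $m$-th term deleted) holomorphic in a neighbourhood of $-m^p$, so $g$ extends meromorphically across $-m^p$ with a simple pole there of residue $1$. Letting $m$ run over $\Nint$ shows that $g$ is meromorphic on all of $\Cplx$, with exactly the asserted poles and residues and no others.

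It remains to identify $f_p$ with $g$. Since $\RE{p}>1$, for $n\ge 2$ we have $|-n^p|=n^{\RE{p}}\ge 2^{\RE{p}}>1$, so the only pole of $g$ in the closed unit disc is the one at $z=-1$; in particular $g$ is holomorphic on $D(0,1)$. The order-$p$ formula \eqref{npplusz} asserts precisely $f_p(z)=g(z)$ for $|z|\le1$, $z\ne-1$, so these two holomorphic functions agree on $D(0,1)$, and $g$ is therefore the unique meromorphic continuation of $f_p$ to $\Cplx$. This proves the theorem; the simple pole at $z=-1$ found earlier is just the case $n=1$ (note $1^p=1$).

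The routine estimates of the first step are really the only thing to watch: one must make sure that for complex $p$ with $\RE{p}>1$ the translated integer powers $-n^p$ genuinely stay distinct and escape to infinity, which is what makes the tail of $g$ normally convergent on compacta of $\Omega$. Once that is in hand, meromorphy, the location of the poles, the value $1$ of each residue, and the impossibility of extra poles are all immediate from the decomposition $g(z)=\frac1{z+m^p}+(\text{holomorphic})$ together with the identity theorem.
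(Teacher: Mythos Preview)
Your proof is correct and follows essentially the same approach as the paper: identify $f_p$ with the globally defined sum $g(z)=\suny\frac{1}{n^p+z}$ and read off each pole and residue by splitting off the singular term $\frac{1}{m^p+z}$. You are considerably more careful than the paper in supplying the details the paper takes for granted---namely the normal convergence of the tail on compacta of $\Cplx\setminus\{-n^p\}$ (hence holomorphy of $g$ there via Weierstrass) and the appeal to the identity theorem to certify that $g$ really is the meromorphic continuation of $f_p$ beyond the unit disc.
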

\begin{proof} All the singularities of $f_p$ must appear in the left hand side of the formula of order $p$. We begin by the pole at $-1$.
We have to calculate the residue by the formula 
$$
	\lim_{z \rightarrow -1} (z+1)f_p(z).
$$

From the formula of order $p$, we have 
\begin{align*}
\lim_{z \to -1} (z+1)f_p(z) &= \lim_{z \to -1}\suny {\frac1{n^p+z}}
\\
	&=\lim_{z \to -1} (z+1)\frac1{1+z}
		+ \lim_{z \to -1} (z+1)\sum_{n=2}^\infty {\frac1{n^p+z}}
\\
	&= 1 + 0 = 1.
\end{align*}
Hence this pole is simple and has residue $1$.
For the other poles, the same demonstration can be given by considering $n^p+z$ instead of $1+z$.
\end{proof}

\begin{rema} When $p$ is not real, the poles of $f_p(z)$ are on a logarithmic spiral.
The poles are on a straight line (the real axis) only when $p$ is real.
It is the only analytical function that we know of whose poles are on a spiral,
without being specially constructed by using the Mittag-Leffler's theorem on the principal parts.
\end{rema}

\section{First generalization}

Let $m$ and $p$ be two complex numbers such that $\RE{m} < \RE{p}-1$. We have, for $|z|<1$,
$$
	\suny \frac{n^m}{n^p+z^p} =  \suny \frac1{n^{p-m}}\frac1{1+(z/n)^p} 
	= \suny \frac1{n^{p-m}} \suzy (-1)^k \left(\frac{z}{n}\right)^{kp}
$$
and then, since the condition on $m$ ensures the absolute convergence of the series,
$$
	\suny \frac{n^m}{n^p+z^p} = \suzy (-1)^k z^{pk} \suny \frac1{n^{p(k+1)-m}},
$$
Hence
\begin{theo} Let $m$ and $p$ be two complex numbers such that $\RE{m} < \RE{p}-1$. 
Then, for $|z| \leq 1$, $z \neq -1$,
\begin{equation}
	\suny \frac{n^m}{n^p+z} =  \suzy {(-1)^k \zeta(pk+p-m) z^{k}},
\end{equation}
and the power series 
$$
	\suzy {(-1)^k \zeta(pk+p-m) z^{k}}
$$
defines a meromorphic function which can be extended to the whole complex plane, 
except for an infinity of simple poles at $z=-n^p$ ($n \in \Nint$) with residue $n^m$.
\end{theo}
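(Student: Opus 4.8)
The plan is to transcribe the proofs of the two theorems above almost verbatim: the extra factor $n^m$ only shifts the arguments of the $\zeta$-values and introduces a factor $n^m$ in the residues, so there is no deep obstacle, and the one point needing genuine care is a locally uniform convergence estimate for the defining series off its poles.

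First I would fix the series identity on a small disc. The computation displayed just before the statement already shows, for $|z|<r_0<1$, that normal convergence (the $M$-test, exactly as in the proof of \eqref{npplusz}) permits the inversion of the two summations and gives
$$
	\suny \frac{n^m}{n^p+z^p}=\suzy(-1)^k z^{pk}\suny\frac1{n^{p(k+1)-m}}=\suzy(-1)^k\zeta(pk+p-m)\,z^{pk},
$$
each $\zeta$-value being finite by the hypothesis $\RE{m}<\RE{p}-1$ (so $\RE{p(k+1)-m}>1$ for every $k\ge0$), precisely as invoked in the paragraph preceding the theorem. Replacing $z^p$ by $z$ yields the stated identity on a disc about the origin. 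Since $\zeta(pk+p-m)\to1$ as $k\to\infty$, the Cauchy--Hadamard formula shows that $\suzy(-1)^k\zeta(pk+p-m)z^k$ has radius of convergence $1$; it therefore defines a holomorphic function on $D(0,1)$ which agrees with $\suny n^m/(n^p+z)$ on the small disc, hence on all of $D(0,1)$ by the identity principle.

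Next I would establish the meromorphic continuation directly from the left-hand side, just as was done for $f_p$ in Theorem 2. The general term $n^m/(n^p+z)$ is $O(n^{\RE{m}-\RE{p}})$ with $\RE{m}-\RE{p}<-1$, uniformly on every compact subset of $\Cplx$ avoiding the set $\{-n^p:n\in\Nint\}$; hence $g(z):=\suny n^m/(n^p+z)$ converges locally uniformly there and is holomorphic on $\Cplx\setminus\{-n^p\}$. To see that $g$ is meromorphic at a point $-N^p$, isolate the offending term,
$$
	g(z)=\frac{N^m}{N^p+z}+\sum_{n\ne N}\frac{n^m}{n^p+z},
$$
where the remaining series is holomorphic near $-N^p$ by the same uniform estimate — it omits the only term whose denominator vanishes there, and $n^p\ne N^p$ for $n\ne N$ since $|n^p|=n^{\RE{p}}$ is strictly increasing in $n$ — whereas $N^m/(N^p+z)=N^m/\bigl(z-(-N^p)\bigr)$ is a simple pole at $-N^p$ with residue $N^m$. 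Thus $g$ is meromorphic on $\Cplx$, its singularities being simple poles exactly at the points $-n^p$ ($n\in\Nint$) with residue $n^m$.

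Finally I would combine the two parts: on $D(0,1)$ the power series equals $g$, and $g$ is meromorphic on all of $\Cplx$, so $g$ is the asserted meromorphic continuation of $\suzy(-1)^k\zeta(pk+p-m)z^k$, with the poles and residues just determined. (The residue at $-N^p$ can equally be read off as in Theorem 2, from $\lim_{z\to -N^p}(z+N^p)g(z)=N^m+0=N^m$.) The step to carry out carefully is the locally uniform convergence of the tail $\sum_{n\ne N}n^m/(n^p+z)$ in a neighbourhood of $z=-N^p$, which is what simultaneously yields holomorphy of $g$ off its poles and the simplicity of every pole; the injectivity of $n\mapsto n^p$ on $\Nint$, needed so that the poles are pairwise distinct, is elementary. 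Everything else is a routine repetition of arguments already given.
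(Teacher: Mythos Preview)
Your proposal is correct and follows the paper's approach exactly: the displayed computation preceding the theorem \emph{is} the paper's proof of the identity, and your treatment of the meromorphic continuation and residues is precisely the adaptation of Theorems~1 and~2 that the paper intends. You are more explicit than the paper about the locally uniform convergence of the tail and the injectivity of $n\mapsto n^p$, but the strategy is identical.
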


\section{Second generalization}

Let $m$ be an integer. For $|z| < 1$, we have
$$
	\suny \frac{\ln^m(n)}{n^p+z^p} =  \suny \frac{\ln^m(n)}{n^{p}}\frac1{1+(z/n)^p}
	 = \suny \frac{\ln^m(n)}{n^{p}} \suzy (-1)^k \left(\frac{z}{n}\right)^{kp}
$$
and then
$$
	\suny \frac{\ln^m(n)}{n^p+z^p} =
	  \suzy (-1)^k z^{pk} \suny \frac{\ln^m(n)}{n^{p(k+1)}}.
$$
Hence
\begin{theo} Let $m$ an integer. For $|z| \leq 1$, $z \neq -1$,
\begin{equation}
	\suny \frac{\ln^m(n)}{n^p+z} =  (-1)^m \suzy {(-1)^k \zeta^{(m)}(pk+p) z^{k}},
\end{equation}
and the power series 
$$
	\suzy {(-1)^k \zeta^{(m)}(pk+p) z^{k}}
$$
defines a meromorphic function which can be extended to the whole complex plane,
except for an infinity of simple poles at $z=-n^p$ ($n \in \Nint$) with residue $(-1)^m \ln^m(n)$.
\end{theo}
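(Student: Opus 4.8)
The plan is to follow the pattern of the proofs of the formula of order $p$ and of Theorem~2, the one genuinely new ingredient being the elementary identity
$$ \suny \frac{\ln^m(n)}{n^s} \;=\; (-1)^m\,\zeta^{(m)}(s) \qquad (\RE{s}>1), $$
which is nothing but term-by-term differentiation of the Dirichlet series $\zeta(s)=\suny n^{-s}$, legitimate in the half-plane of absolute convergence $\RE{s}>1$; note in passing that the $n=1$ summand disappears as soon as $m\ge1$, a fact that will matter at $z=-1$.

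First I would fix $|z|<r_0<1$. As in the earlier proofs, the $M$-test gives the normal convergence of the double series $\suny \frac{\ln^m(n)}{n^{p}}\suzy (-1)^k (z/n)^{kp}$, so the two summations may be interchanged; combined with the computation displayed just before the statement and with the identity above applied at $s=p(k+1)$ (legitimate, since $\RE{p}>1$ forces $\RE{p(k+1)}>1$ for every $k\ge0$), this yields
$$ \suny \frac{\ln^m(n)}{n^p+z^p} \;=\; (-1)^m\suzy (-1)^k\,\zeta^{(m)}(pk+p)\,z^{pk}, $$
and substituting $z$ for $z^p$ gives the asserted formula on the disc $D(0,r_0)$.

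Next I would treat the radius of convergence and the continuation. Estimating $|\zeta^{(m)}(s)|$ as $\RE{s}\to+\infty$ (the $n=2$ term dominates, so $\zeta^{(m)}(pk+p)=O\!\left(2^{-k\,\RE{p}}\right)$ when $m\ge1$, and it is bounded when $m=0$), the Cauchy--Hadamard formula shows the right-hand power series has radius of convergence at least $1$ and hence defines a holomorphic function on $D(0,1)$. On the other hand, $\suny \ln^m(n)/(n^p+z)$ converges uniformly on every compact set $K$ avoiding the points $-n^p$: there $|n^p+z|\ge\frac12 n^{\RE{p}}$ for all large $n$, so the tail is dominated by $\sum \ln^m(n)/n^{\RE{p}}$, convergent because $\RE{p}>1$. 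Thus $(-1)^m\suny \ln^m(n)/(n^p+z)$ is meromorphic on $\Cplx$, holomorphic on $D(0,1)$, and agrees with the power series on $D(0,r_0)$ — hence, by analytic continuation, on all of $D(0,1)$. This exhibits the meromorphic continuation of the power series, and its poles are located exactly as for $f_p$: near $z=-n^p$ all terms but the $n$-th are holomorphic and $\mathrm{Res}_{z=-n^p}\ln^m(n)/(n^p+z)=\ln^m(n)$, so the residue of the power series there is $(-1)^m\ln^m(n)$ — in particular $0$ at $z=-1$ when $m\ge1$, consistent with the $n=1$ summand then being identically zero. The statement on the circle $|z|=1$, $z\neq-1$, follows as in the formula of order $p$.

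The only step genuinely requiring attention is the justification of the termwise-differentiation identity for $\zeta^{(m)}$ — standard in the half-plane of absolute convergence, but it is precisely what produces the coefficients $\zeta^{(m)}(pk+p)$ — together with the bookkeeping of the would-be pole at $z=-1$, which for $m\ge1$ is vacuous. Everything else is a verbatim repetition of the arguments already given for the formula of order $p$ and Theorem~2.
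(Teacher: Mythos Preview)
Your proposal is correct and follows essentially the same approach as the paper: the displayed computation preceding the theorem is precisely the geometric-series expansion and interchange of summations you describe, with the identification $\suny \ln^m(n)\,n^{-s}=(-1)^m\zeta^{(m)}(s)$ left implicit, and the meromorphic continuation and pole structure handled by analogy with Theorems~1 and~2. Your write-up is in fact more careful than the paper's on several points---the Cauchy--Hadamard estimate, the uniform convergence on compacts avoiding the $-n^p$, and especially the observation that for $m\ge1$ the would-be pole at $z=-1$ has zero residue and is therefore removable (so the exclusion $z\neq-1$ in the statement is vacuous in that case).
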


By combining the two previous theorems, we get a more general result.
\begin{theo} Let $m$ be an integer, and $p$, $q$ be two complex numbers such that $\RE{q} < \RE{p}-1$.

For $|z| \leq 1$, $z \neq -1$,
\begin{equation}
	\suny \frac{n^q\ln^m(n)}{n^p+z} =  (-1)^m \suzy {(-1)^k \zeta^{(m)}(pk+p-q) z^{k}},
\end{equation}
and the power series 
$$
	\suzy {(-1)^k \zeta^{(m)}(pk+p-q) z^{k}}
$$
defines a meromorphic function which can be extended to the whole complex plane, 
except for an infinity of simple poles at $z=-n^p$ ($n \in \Nint$) with residue $(-1)^m n^q\ln^m(n)$.
\end{theo}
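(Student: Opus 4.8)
The plan is to repeat the computation carried out just before Theorems~3 and~4, this time keeping the two factors $n^q$ and $\ln^m(n)$ simultaneously, and then to obtain the analytic continuation exactly as there. First I would fix $r_0$ with $|z|<r_0<1$ and write
$$
\suny \frac{n^q\ln^m(n)}{n^p+z^p}
= \suny \frac{\ln^m(n)}{n^{p-q}}\,\frac1{1+(z/n)^p}
= \suny \frac{\ln^m(n)}{n^{p-q}}\suzy(-1)^k\left(\frac{z}{n}\right)^{kp}.
$$
Since $\RE{q}<\RE{p}-1$ gives $\RE{p}-\RE{q}>1$, the double family is dominated term by term by $\sum_{n,k}\ln^m(n)\,r_0^{kp}\,n^{\RE{q}-\RE{p}}<\infty$; this normal convergence (the $M$-test, as for Theorem~1) legitimizes swapping the two summations, after which
$$
\suny \frac{n^q\ln^m(n)}{n^p+z^p}
= \suzy(-1)^k z^{pk}\suny \frac{\ln^m(n)}{n^{p(k+1)-q}}.
$$

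The second step is to recognise the inner sum. For $\RE{s}>1$ the Dirichlet series of $\zeta$ may be differentiated term by term, which gives $\suny \ln^m(n)\,n^{-s}=(-1)^m\zeta^{(m)}(s)$; here $s=p(k+1)-q$ satisfies $\RE{p}(k+1)-\RE{q}\ge\RE{p}-\RE{q}>1$ for every $k\ge0$, so the inner sum equals $(-1)^m\zeta^{(m)}(pk+p-q)$. Hence the right-hand side becomes $(-1)^m\suzy(-1)^k\zeta^{(m)}(pk+p-q)z^{pk}$, and substituting $z$ for $z^p$ as in the proof of Theorem~1 yields the asserted identity on $D(0,r_0)$. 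Since the coefficients $\zeta^{(m)}(pk+p-q)$ are bounded (they tend to $1$ when $m=0$ and to $0$ when $m\ge1$), the Cauchy--Hadamard formula gives a radius of convergence at least $1$, so the identity extends to $D(0,1)$, $z\neq-1$. Equivalently, the whole formula can be obtained by differentiating the identity of Theorem~3, with its exponent parameter renamed $q$, a total of $m$ times with respect to $q$: on the left $\partial_q^m$ brings out the factor $\ln^m(n)$, on the right $\partial_q^m\zeta(pk+p-q)=(-1)^m\zeta^{(m)}(pk+p-q)$, and differentiation under the summation sign is legitimate by uniform convergence on compact subsets of $\{\RE{q}<\RE{p}-1\}\times D(0,1)$ that avoid $z=-1$.

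For the meromorphic continuation and the residues I would work on the left-hand side directly, as in Theorems~2--4. On every compact subset of $\Cplx\setminus\{-n^p:n\in\Nint\}$ the general term of $\suny \frac{n^q\ln^m(n)}{n^p+z}$ is $O(\ln^m(n)\,n^{\RE{q}-\RE{p}})$, which is summable since $\RE{q}-\RE{p}<-1$; thus this series defines a holomorphic function there with at worst a simple pole at each point $-n^p$. Fixing $n_0$ and isolating the single term $\frac{n_0^q\ln^m(n_0)}{n_0^p+z}$, whose residue at $-n_0^p$ is $n_0^q\ln^m(n_0)$ while the remaining series is holomorphic near $-n_0^p$, one obtains $\lim_{z\to-n_0^p}(z+n_0^p)\suny \frac{n^q\ln^m(n)}{n^p+z}=n_0^q\ln^m(n_0)$; multiplying by the factor $(-1)^m$ that relates the left-hand side to the power series gives the residue $(-1)^m n_0^q\ln^m(n_0)$. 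As the power series coincides with this meromorphic function on $D(0,1)$, it extends to it.

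The only genuinely delicate point, I expect, is the interchange of the two summations in the first step: it is precisely there that the hypothesis $\RE{q}<\RE{p}-1$ is consumed, this being exactly the $k=0$ case of the inequality $\RE{p}(k+1)-\RE{q}>1$ that is also needed to write each inner sum as a derivative of $\zeta$. Everything else is a transcription of the arguments already used for Theorems~2, 3 and~4.
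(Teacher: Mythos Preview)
Your proposal is correct and is exactly the argument the paper intends: the paper states this theorem without proof beyond the remark ``By combining the two previous theorems, we get a more general result,'' and your write-up carries out that combination explicitly, reproducing for the mixed weight $n^q\ln^m(n)$ the same expansion/interchange/Cauchy--Hadamard/residue pattern used in Theorems~1--4. Your alternative derivation via $\partial_q^m$ of Theorem~3 is a pleasant bonus, but the direct computation you give first is already the paper's approach.
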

\section{Third generalization}
The same principle works with more complex expressions.

We have, with two terms, for $|z|<1/\max(|a|,|b|)$, $\RE{p}>1$, $\RE{q}>1$, 
\begin{equation}
	\suny \frac{1}{(n^p+az)(n^q+bz)} =  \sum_{l=0}^\infty \suzy {(-1)^{k+l} \zeta(p+q +pk+lq) a^k b^l z^{k+l}}.
\end{equation}
With three terms, for $|z|<1/\max(|a|,|b|,|c|)$, $\RE{p}>1$, $\RE{q}>1$, $\RE{r}>1$,
\begin{multline}
	\suny \frac{1}{(n^p+az)(n^q+bz)(n^r+cz)} =\\
	\sum_{m=0}^\infty \sum_{l=0}^\infty \suzy {(-1)^{k+l+m} \zeta(p+q+r +pk+lq+mr) a^k b^l c^m z^{k+l+m}}.
\end{multline}
And so on.

\section{General formulas}

As previously we work with expressions of the form $\sum a_n/(b_n+z)$ with $\sum a_n/b_n$ absolutely summable.
We remark that these expressions, as function of $z$, are absolutely summable for $z \in \Cplx - \{b_n | n \in \Nint\}$.

We have also
\begin{lemma} (Abel, 1826) \label{Abellem}

Let the entire series $$f(z)=\suzy a_k z^k$$ of convergence radius $1$.
If $$\suzy a_k$$ converge to $L$ then, for $z$ in an Stolz angle of vertex $1$, $$\lim_{z \to 1} \suzy a_k z^k = L=f(1).$$ 

\end{lemma}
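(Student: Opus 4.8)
The plan is to reduce everything to Abel's summation-by-parts identity, after which the only genuinely non-routine step is a geometric estimate on the Stolz angle. Write $s_n=\sum_{k=0}^n a_k$ with the convention $s_{-1}=0$, so that $a_k=s_k-s_{k-1}$. Summation by parts gives, for every $N\ge 0$,
$$\sum_{k=0}^{N}a_k z^k=(1-z)\sum_{k=0}^{N-1}s_k z^k+s_N z^N .$$
Since $(s_N)$ converges to $L$ it is bounded, hence for $|z|<1$ we have $s_N z^N\to0$ and $\sum_k s_k z^k$ converges; letting $N\to\infty$ yields the representation
$$f(z)=(1-z)\suzy s_k z^k\qquad(|z|<1).$$
Because also $(1-z)\suzy z^k=1$ on $|z|<1$, we get $L=(1-z)\suzy L z^k$, and subtracting,
$$f(z)-L=(1-z)\suzy (s_k-L)\,z^k\qquad(|z|<1).$$

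Next I would fix $\varepsilon>0$ and choose $M$ so that $|s_k-L|<\varepsilon$ for all $k\ge M$ (possible since $s_k\to L$). Splitting the last series at $M$,
$$f(z)-L=(1-z)\sum_{k=0}^{M-1}(s_k-L)z^k+(1-z)\sum_{k=M}^{\infty}(s_k-L)z^k .$$
The first summand is $(1-z)$ times a fixed polynomial in $z$, hence tends to $0$ as $z\to1$ in any manner. The second summand is bounded in absolute value by
$$\varepsilon\,|1-z|\sum_{k=M}^{\infty}|z|^{k}\le \varepsilon\,\frac{|1-z|}{1-|z|}.$$

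The decisive point, and the step I expect to be the main obstacle, is that the ratio $|1-z|/(1-|z|)$ stays bounded as $z\to1$ inside a Stolz angle. Indeed a Stolz angle with vertex $1$ is (contained in) a set of the form $\{\,z\in D(0,1):|1-z|\le C(1-|z|)\,\}$ for some constant $C=C_\theta\ge1$ depending only on the opening of the angle: a non-tangential approach to $1$ forces $1-|z|$ to remain comparable with $|1-z|$, whereas a tangential approach would let $1-|z|$ become arbitrarily small relative to $|1-z|$. Granting this comparison, the tail above is $\le C\varepsilon$, so for $z$ in the Stolz angle
$$\limsup_{z\to1}\bigl|f(z)-L\bigr|\le C\varepsilon ,$$
and since $\varepsilon>0$ was arbitrary, $\lim_{z\to1}f(z)=L$. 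That this limit coincides with $f(1)$ is immediate, as $f(1)=\suzy a_k=L$ by hypothesis. Thus the whole argument rests on the elementary summation-by-parts identity together with the single geometric fact about Stolz angles, which is where I would concentrate the actual verification.
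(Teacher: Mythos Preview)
Your argument is the standard textbook proof of Abel's theorem and is correct as written: summation by parts produces the representation $f(z)-L=(1-z)\sum_{k\ge 0}(s_k-L)z^k$, the finite head vanishes at $z=1$, and the tail is controlled by the Stolz-angle inequality $|1-z|\le C(1-|z|)$. There is nothing to compare against, however, because the paper does not prove this lemma at all---it merely states it as a classical result attributed to Abel (1826) and then invokes it in the proof of Theorem~\ref{thdirichlet}.
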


\begin{theo} \label{thdirichlet}
Consider the Dirichlet's series
$$
	f(s)=\suny \frac{a_n}{n^s}
$$
having $\sigma_a \neq +\infty$ as abscissa of absolute convergence.
Let $s$ a complex number such that $\RE{s}>\sigma_a$.

For $|z| \leq 1$, $z \neq -1$,
\begin{equation} \label{thdirichletf}
	\suny \frac{a_n}{n^s+z} =  \suzy {(-1)^k f(ks+s) z^{k}}.
\end{equation}

The power series 
$$
	g(z)=\suzy {(-1)^k f(ks+s) z^{k}}
$$
defines a meromorphic function $g$ which can be extended to the whole complex plane, 
except for an infinity\footnote{We assume the existence of an infinite subsequence of non-zero numbers $a_n$.} of simple poles at $z=-n^s$ ($n \in \Nint$) with residue $a_n$ if $a_n \neq 0$.
\end{theo}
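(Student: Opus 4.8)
The plan is to re-run the machinery already used for Theorems~1 and~2 with the zeta values replaced by values of $f$, and then to observe that the left-hand side of \eqref{thdirichletf} is \emph{itself}, essentially by inspection, the meromorphic function being claimed. For the identity: fix $r_0\in(0,1)$ and take $|z|<r_0$. Since $\RE{s}>0$ — which is automatic as soon as $\sigma_a\ge 0$, in particular for $\zeta$ — we have $|z/n^{s}|=|z|\,n^{-\RE{s}}\le|z|<r_0<1$ for every $n\ge 1$; together with $\suny|a_n|/n^{\RE{s}}<\infty$ (valid because $\RE{s}>\sigma_a$) this makes the double series $\suny\frac{a_n}{n^{s}}\suzy(-1)^k(z/n^{s})^k$ normally convergent, so the summations may be inverted exactly as in the proof of Theorem~1:
$$
\suny\frac{a_n}{n^{s}+z}=\suzy(-1)^k z^k\suny\frac{a_n}{n^{(k+1)s}}=\suzy(-1)^k f(ks+s)\,z^k .
$$
The crude bound $|f((k+1)s)|\le|a_1|+C\,2^{-k\RE{s}}$ with $C=\suny|a_n|/n^{\RE{s}}$ gives $\limsup_k|f((k+1)s)|^{1/k}\le 1$, so by Cauchy--Hadamard the right-hand power series has radius of convergence at least $1$, and the identity propagates from $D(0,r_0)$ to $D(0,1)$ just as in Theorem~1.

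\emph{Meromorphic continuation.} This is the heart of the matter. Put $G(z):=\suny a_n/(n^{s}+z)$. As remarked at the opening of this section, when $z$ ranges over a compact $K$ disjoint from $\{-n^{s}:n\in\Nint\}$ one has $|n^{s}+z|\ge n^{\RE{s}}-\sup_{K}|z|\ge\tfrac12 n^{\RE{s}}$ for all large $n$, so the series defining $G$ converges uniformly on $K$; hence $G$ is holomorphic on $\Cplx\setminus\{-n^{s}:n\in\Nint\}$, i.e.\ meromorphic on $\Cplx$ with at most simple poles at the points $-n^{s}$ (which are distinct, and $|-n^{s}|=n^{\RE{s}}\to\infty$, precisely because $\RE{s}>0$). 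Since $G$ agrees with the power series $g$ on $D(0,1)$ by Step~1, $G$ \emph{is} the meromorphic extension of $g$ to $\Cplx$; no Mittag--Leffler construction is needed.

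\emph{Residues and the boundary circle.} For $z_0=-n_0^{\,s}$ with $a_{n_0}\neq 0$, split $G(z)=\dfrac{a_{n_0}}{n_0^{\,s}+z}+\sum_{n\neq n_0}\dfrac{a_n}{n^{s}+z}$; the second sum is holomorphic near $z_0$ (one term removed from a series converging uniformly on a neighbourhood of $z_0$), so $\operatorname{Res}_{z=z_0}G=\operatorname{Res}_{z=z_0}\dfrac{a_{n_0}}{n_0^{\,s}+z}=a_{n_0}$, which is essentially the residue computation of Theorem~2. Finally, for $|z_0|=1$ with $z_0\neq-1$ we have $z_0\neq-n^{s}$ for every $n$ (for $n\ge 2$ since $|-n^{s}|=n^{\RE{s}}>1$, for $n=1$ by hypothesis), so $G$ is continuous at $z_0$; applying Abel's lemma (Lemma~\ref{Abellem}) to $w\mapsto g(z_0 w)$ shows that, whenever $\suzy(-1)^k f(ks+s)z_0^k$ converges, it equals $\lim_{r\to1^{-}}g(rz_0)=\lim_{r\to1^{-}}G(rz_0)=G(z_0)$, which is \eqref{thdirichletf} at $z_0$.

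\emph{Where the difficulty lies.} Steps~1 and~3 are faithful transcriptions of what was done for Theorems~1 and~2; the genuine content is the middle step, namely recognising that $\suny a_n/(n^{s}+z)$ is in itself a globally meromorphic function, so the left-hand side already supplies the continuation. The one delicate point is the meaning of \eqref{thdirichletf} on $|z|=1$: when $a_1\neq 0$ (e.g.\ for $\zeta$, where $f(ks+s)=\zeta((k+1)s)\to 1$) the series $g$ does not converge on the unit circle, so there \eqref{thdirichletf} must be understood in the Abel (or Cesàro) sense — or else one keeps \eqref{thdirichletf} on $D(0,1)$ and lets the continuation step handle everything outside. One should likewise keep the standing requirement $\RE{s}>0$ explicit, since it is exactly what stops the poles $-n^{s}$ from colliding with one another or accumulating at a finite point.
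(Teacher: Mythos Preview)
Your proof follows essentially the same route as the paper's: geometric expansion of $1/(n^{s}+z)$, interchange of summations by absolute convergence, Abel's lemma for the boundary circle, and the residue computation by splitting off the singular term from the left-hand side. You are in fact more careful than the paper on two points it glosses over --- the implicit need for $\RE{s}>0$ so that the points $-n^{s}$ are distinct and escape to infinity, and the fact that when $a_1\neq 0$ the power series generally diverges on $|z|=1$, so that \eqref{thdirichletf} there must be read in the Abel/Ces\`aro sense rather than as ordinary convergence.
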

\begin{proof} We have, for $|z| < 1$,
$$\suny \frac{a_n}{n^s+z^s} =  \suny \frac{a_n}{n^s}\frac1{1+(z/n)^s} = \suny \frac{a_n}{n^s} \suzy (-1)^k \left(\frac{z}{n}\right)^{ks}$$ 
and we obtain, with the inversion of summations, legitimated by absolute convergence,
$$\suny \frac{a_n}{n^s+z^s} =\suzy (-1)^k z^{sk} \suny \frac{a_n}{n^{s(k+1)}}=\suzy {(-1)^k f(ks+s) z^{sk}}$$
Substituting $z$ to $z^s$ yields the formula (\ref{thdirichletf}).

To extend to $|z| \leq 1$, $z \neq -1$, we apply the Abel's lemma \ref{Abellem}.

All the singularities of $g$ must appear in the left hand side of (\ref{thdirichletf}).

We have
\begin{align*}
 \lim_{z \to -n^s} (z+n^s)g(z) &= \lim_{z \to -n^s}\suny {\frac{a_n}{n^s+z}}
 \\
 	&=\lim_{z \to -n^s} (z+n^s)\frac{a_n}{n^s+z}
 		+ \lim_{z \to -n^s} (z+n^s)\sum_{k \neq n} {\frac{a_k}{k^s+z}}
 \\
 	&= a_n + 0 = a_n.
\end{align*}
Hence $z=-n^s$ is a simple pole if $a_n \neq 0$.

For a point $\alpha$ not of the form $-n^s$, $$\lim_{z \to \alpha} (z-\alpha)g(z) = \lim_{z \to \alpha}(z-\alpha)\suny {\frac{a_n}{n^s+z}} = 0,$$ so there is not other singularity in $\Cplx$.
\end{proof}
\begin{rema}
We can work with expressions as
$$\suny \frac{a_n}{n^s(n^s+z)} =  \suzy {(-1)^k f(ks+2s) z^{k}},$$
or
$$\suny \frac{a_n}{n^{s_1}(n^{s_2}+\alpha z)} =  \suzy {(-1)^k f(ks_2+s_1+s_2) \alpha^k z^{k}},$$
and so on.
\end{rema}
\begin{coro} Under the previous hypotheses, for an integer $m \ge 0$, we have
\begin{equation}
	\suny \frac{a_n}{(n^s+z)^{m+1}} =  \suky {(-1)^{k+m} f(ks+s) \frac{k!}{m!(k-m)!} z^{k-m}}.
\end{equation}
\end{coro}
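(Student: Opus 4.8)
The plan is to derive the corollary by differentiating the identity of Theorem \ref{thdirichlet} with respect to $z$ a total of $m$ times. Starting from
$$
	\suny \frac{a_n}{n^s+z} = \suzy {(-1)^k f(ks+s) z^{k}},
$$
I would note that both sides are holomorphic in $z$ on the common domain of validity (the left side on $\Cplx - \{-n^s\}$, the right side inside the unit disc and, by the meromorphic continuation, everywhere away from the poles), so term-by-term differentiation is legitimate. On the left, $\frac{d^m}{dz^m}(n^s+z)^{-1} = (-1)^m m!\,(n^s+z)^{-(m+1)}$; on the right, $\frac{d^m}{dz^m} z^k = \frac{k!}{(k-m)!}\,z^{k-m}$ for $k \ge m$ and $0$ otherwise. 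Dividing by $(-1)^m m!$ then gives exactly
$$
	\suny \frac{a_n}{(n^s+z)^{m+1}} = \suky {(-1)^{k+m} f(ks+s)\,\frac{k!}{m!\,(k-m)!}\,z^{k-m}},
$$
which is the claimed formula.

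The steps, in order: first, establish that the power series $g(z) = \sum_k (-1)^k f(ks+s) z^k$ has radius of convergence at least $1$ (already implicit in Theorem \ref{thdirichlet}), so that inside the disc one may differentiate it termwise $m$ times without concern. Second, differentiate the left-hand series $\sum_n a_n/(n^s+z)$ termwise; this requires that the differentiated series converge locally uniformly, which follows because $\sum_n a_n/n^{s(m+1)}$ converges absolutely whenever $\RE{(m+1)s} > \sigma_a$, and more simply because away from the poles the terms are dominated uniformly on compacta — the convergence of $\sum a_n/(n^s+z)$ together with the Weierstrass theorem on locally uniformly convergent sequences of holomorphic functions guarantees the derivative is obtained termwise. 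Third, match the two differentiated expressions and divide through by $(-1)^m m!$. Finally, one should remark that the resulting identity, valid a priori for $|z|<1$, extends to $|z|\le 1$, $z\neq -1$ by the same Abel-summation argument used in Theorem \ref{thdirichlet}, provided the boundary series converges.

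The only genuinely delicate point is justifying termwise differentiation of the left-hand side $\sum_n a_n/(n^s+z)$ — one must be a little careful because the individual terms $a_n/(n^s+z)$ are meromorphic with poles marching off to infinity, so "locally uniform convergence on $\Cplx - \{-n^s\}$" needs the observation that on any compact $K$ avoiding all poles, only finitely many $n$ have $|n^s+z|$ small and the tail is controlled by $\sum |a_n|/|n|^{\RE s}$ times a constant depending on $K$. Once that is in place, everything else is a mechanical computation of derivatives of $(n^s+z)^{-1}$ and of the monomials $z^k$; I expect no obstacle there. An alternative, equally valid route is to expand $1/(n^s+z)^{m+1}$ directly as a geometric-type series $\sum_k (-1)^k \binom{k}{m} (z/n^s)^{k-m}/n^{s(m+1)}$ — which is just the $m$-th derivative of the geometric series — and then invert the order of summation exactly as in the proof of Theorem \ref{thdirichlet}; this bypasses any appeal to differentiating an already-summed function and keeps the argument uniform with the rest of the paper.
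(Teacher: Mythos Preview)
Your proposal is correct and follows exactly the paper's approach: the paper's entire proof is the single sentence ``This is the derivative of order $m$ of the formula (\ref{thdirichletf}) with respect to $z$.'' You have supplied considerably more justification for termwise differentiation than the paper does, but the underlying idea is identical.
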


\begin{proof} This is the derivative of order $m$ of the formula (\ref{thdirichletf}) with respect to $z$.\end{proof}
\begin{coro} Let $(\alpha_i)$ a sequence of $m>1$ complex numbers, $(\beta_i)$ a sequence of $m$ complex numbers such that $\RE{\beta_i}>1$  and  $f$ as in the previous theorem \ref{thdirichlet}.

We have, for $|z| < 1/\max_{i=1, \ldots, m} (\alpha_i)$
\begin{multline}
	\suny a_n \prod_{i=1}^m \frac1{(n^{\beta_i}+\alpha_i z)} =\\
	\sum_{m_1=0}^\infty \sum_{m_2=0}^\infty \ldots \sum_{m_m=0}^\infty{(-1)^{\sum_{i=1}^m {m_i}} f\left(\sum_{i=1}^m (m_i+1) \beta_i \right) \left(\prod_{i=1}^m \alpha_i^{m_i}\right) z^{\sum_{i=1}^m m_i}}.
\end{multline}
\end{coro}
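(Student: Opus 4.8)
The plan is to extend to an arbitrary number $m$ of factors, and to an arbitrary Dirichlet series $f$, the elementary computation already performed in the ``Third generalization'' section for $f=\zeta$ with two and three factors, handling the convergence exactly as in the proof of Theorem~\ref{thdirichlet}. I work throughout in the open region $|z|<1/\max_{i}|\alpha_i|$ (the modulus being the intended reading of the statement), so that $|\alpha_i z|<1$ for every $i$, and I add the hypothesis, which is tacitly needed for the right-hand side to be defined, that $\RE{\sum_{i=1}^m\beta_i}>\sigma_a$; this guarantees that every value of $f$ appearing in the conclusion is taken inside the half-plane of absolute convergence of $f$.

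First I would expand each of the $m$ factors as a geometric series: since $|\alpha_i z/n^{\beta_i}|\le|\alpha_i z|<1$ for all $n\in\Nint$ and all $i$,
$$
	\frac1{n^{\beta_i}+\alpha_i z}=\frac1{n^{\beta_i}}\sum_{m_i=0}^\infty(-1)^{m_i}\Bigl(\frac{\alpha_i z}{n^{\beta_i}}\Bigr)^{m_i}=\sum_{m_i=0}^\infty(-1)^{m_i}\alpha_i^{m_i}z^{m_i}\,n^{-(m_i+1)\beta_i},
$$
each series being absolutely convergent. A finite iteration of the rearrangement theorem for products of absolutely convergent series then gives, for each fixed $n$, the (again absolutely convergent) $m$-fold expansion
$$
	\prod_{i=1}^m\frac1{n^{\beta_i}+\alpha_i z}=\sum_{m_1=0}^\infty\cdots\sum_{m_m=0}^\infty(-1)^{\sum_i m_i}\Bigl(\prod_{i=1}^m\alpha_i^{m_i}\Bigr)z^{\sum_i m_i}\,n^{-\sum_i(m_i+1)\beta_i}.
$$
Multiplying by $a_n$, summing over $n$, interchanging $\suny$ with the $m$-fold sum, and recognising $\sum_{n\ge1}a_n\,n^{-\sum_i(m_i+1)\beta_i}=f\bigl(\sum_i(m_i+1)\beta_i\bigr)$ would then produce exactly the claimed identity.

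The main obstacle is the justification of that last interchange of the $m+1$ summations, i.e.\ the absolute-convergence estimate
$$
	\sum_{n\ge1}|a_n|\sum_{m_1,\dots,m_m\ge0}\Bigl(\prod_{i=1}^m|\alpha_i z|^{m_i}\Bigr)n^{-\sum_i(m_i+1)\RE{\beta_i}}=\sum_{n\ge1}|a_n|\prod_{i=1}^m\frac{n^{-\RE{\beta_i}}}{1-|\alpha_i z|\,n^{-\RE{\beta_i}}},
$$
which, using $n^{-\RE{\beta_i}}\le1$ for $n\ge1$, is bounded by $\bigl(\prod_{i=1}^m(1-|\alpha_i z|)\bigr)^{-1}\sum_{n\ge1}|a_n|\,n^{-\RE{\sum_i\beta_i}}<\infty$ precisely under the standing assumption $\RE{\sum_i\beta_i}>\sigma_a$; with this, Fubini's theorem legitimises the interchange. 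Two minor points also deserve care: the geometric expansion must hold for \emph{every} $n\in\Nint$, the binding constraint being $n=1$, which is exactly why the condition is $|z|<1/\max_i|\alpha_i|$; and every argument $\sum_i(m_i+1)\beta_i$ of $f$ has real part at least $\RE{\sum_i\beta_i}$, so increasing any $m_i$ only pushes us deeper into the domain of absolute convergence of $f$. One could alternatively argue by induction on $m$, peeling off one factor $1/(n^{\beta_m}+\alpha_m z)$ at a time with the help of Theorem~\ref{thdirichlet} and the Remark following it, but the direct computation above is cleaner, since the induction would require re-justifying an interchange of summations at each step.
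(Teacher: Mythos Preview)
Your proposal is correct and follows exactly the approach the paper intends: the paper gives no explicit proof for this corollary, but the method is the obvious generalisation of the computations already displayed in the ``Third generalization'' section (for $m=2,3$ factors with $f=\zeta$) combined with the proof of Theorem~\ref{thdirichlet}, namely geometric expansion of each factor followed by an interchange of summations justified by absolute convergence. Your observation that the hypothesis $\RE{\sum_i\beta_i}>\sigma_a$ is tacitly required (the paper's stated condition $\RE{\beta_i}>1$ gives only $\RE{\sum_i\beta_i}>m$, which need not exceed $\sigma_a$) is a genuine and useful addition.
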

\begin{rema} If $\alpha_l=0$ then $m_l=0$ in the formula. \end{rema}
\begin{theo} \label{thserie}
Let the analytical function $f(s)$ be defined in the disc $D(0,R)$, $R>0$, by its power series~:
$$
	f(s) = \sum_{k=1}^\infty a_k s^k,
$$
and zero in $0$.

Fix a complex $s$ such that $\RE{s} > 1$.

For $|z| <R$,
\begin{equation}
	\suny f\left(\frac{z}{n^s}\right) =  \sum_{k=1}^\infty {a_k \zeta(ks) z^{k}}.
\end{equation}

If $a_1=0$, and $s$ such that $\RE{s} > \frac12$,
\begin{equation}
	\suny f\left(\frac{z}{n^s}\right) =  \sum_{k=2}^\infty {a_k \zeta(ks) z^{k}}.
\end{equation}

\end{theo}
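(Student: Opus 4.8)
The plan is to substitute the power series defining $f$ into each summand and then invert the order of summation; the factor $\sum_{n\ge1}n^{-ks}=\zeta(ks)$ will appear on its own, so the only genuine work is justifying that inversion.

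First I would note that for $|z|<R$ and every $n\ge 1$ one has $|z/n^s|=|z|/n^{\RE{s}}\le|z|<R$, hence
$$
	f\!\left(\frac{z}{n^s}\right)=\sum_{k=1}^\infty a_k\,\frac{z^k}{n^{ks}},
$$
with absolute convergence. Summing over $n$ produces a double series, and if it converges absolutely then the inversion of summations is legitimate (as in the proof of the formula of order $p$); since $\RE{ks}=k\,\RE{s}\ge\RE{s}>1$ for every $k\ge1$, the inner sum $\sum_{n\ge1}n^{-ks}$ is precisely $\zeta(ks)$. Thus everything reduces to the absolute convergence of
$$
	\sum_{n=1}^\infty\sum_{k=1}^\infty |a_k|\,\frac{|z|^k}{n^{k\RE{s}}}.
$$

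For this I would use that the power series of $f$ has radius of convergence at least $R$: fixing any $\rho$ with $|z|<\rho<R$ yields a constant $M$ with $|a_k|\le M\rho^{-k}$ for all $k$. Then I isolate the term $n=1$, whose contribution $\sum_k|a_k||z|^k$ is finite, and for $n\ge2$ sum the geometric series in $k$:
$$
	\sum_{n\ge2}\sum_{k\ge1}M\left(\frac{|z|/\rho}{n^{\RE{s}}}\right)^{\!k}
	= M\sum_{n\ge2}\frac{|z|/\rho}{\,n^{\RE{s}}-|z|/\rho\,},
$$
which is valid because for $n\ge2$ one has $n^{\RE{s}}\ge 2^{\RE{s}}>2>|z|/\rho$, and the right-hand side is $O(n^{-\RE{s}})$, hence summable since $\RE{s}>1$. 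This is the step I expect to require the most care: one must choose $\rho$ strictly between $|z|$ and $R$ so that the $k$-sum converges for every $n\ge2$, and it is exactly the hypothesis $\RE{s}>1$ that makes the residual $n$-sum converge.

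For the refined statement, $a_1=0$ means the $k$-sum starts at $k=2$, so $f(w)=O(w^2)$ near $0$ and the inner geometric series becomes $\sum_{k\ge2}(\,\cdot\,)^k$, which is $O(n^{-2\RE{s}})$ for $n\ge2$; the tail over $n\ge2$ then converges as soon as $2\,\RE{s}>1$, i.e. $\RE{s}>\tfrac12$. In either case the inversion of summations is justified, and reading off $\sum_{n\ge1}n^{-ks}=\zeta(ks)$ completes the proof.
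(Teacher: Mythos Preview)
Your proposal is correct and follows exactly the paper's approach: substitute the power series for $f$ into each summand and invert the order of summation so that $\sum_{n\ge1}n^{-ks}=\zeta(ks)$ appears. The paper's own proof is a single displayed line prefaced by ``the conditions ensure the absolute convergence of the series,'' so your explicit justification of that absolute convergence (via the Cauchy bound $|a_k|\le M\rho^{-k}$ and the geometric-series estimate yielding an $O(n^{-\RE{s}})$ or $O(n^{-2\RE{s}})$ tail) simply fills in what the paper leaves to the reader.
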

\begin{rema} This theorem contents the formulas of \cite{Jameson:2014} when $s=1$ and $z=1$ or $z=-1$. 
\end{rema}
\begin{proof} The conditions ensure the absolute convergence of the series. We have
\begin{align*}
	\suny f\left(\frac{z}{n^s}\right) &=  \suny \sum_{k=1}^\infty a_k\left(\frac{z}{n^s}\right)^k\\
	                                  &=  \sum_{k=1}^\infty {a_k \suny \left(\frac{z}{n^s}\right)^k}\\
	                                  &=  \sum_{k=1}^\infty {a_k \zeta(ks) z^{k}}.
\end{align*}
\end{proof}
By combining the last two theorems, we get a more general result.
\begin{theo} \label{thdirichletserie}
Let the analytical function $f$, defined in the disc $D(0,R)$ by the series
$$
	f(z) = \sum_{k=1}^\infty{a_k z^k},
$$
and zero in $0$.

Let $g(s)$ the complex analytical function defined by the Dirichlet's series
$$
	g(s)=\suny \frac{b_n}{n^s}
$$
in the half-plane $\RE{s}>\sigma_a$.

Let a complex number $s$ such that $\RE{s} \ge 1$, $|z|<R$, another number $s'$ such that $\RE{s+s'}>\sigma_a$.

We have
\begin{equation}
	\suny \frac{b_n}{n^{s'}}f\left(\frac{z}{n^s}\right) =  \sum_{k=1}^\infty {a_k  g(ks+s')} z^{k}.
\end{equation}
\end{theo}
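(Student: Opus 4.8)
The plan is to run the argument of Theorem~\ref{thserie} verbatim, with the Riemann zeta function replaced by the Dirichlet series $g$ (as in Theorem~\ref{thdirichlet}) and the extra weight $b_n/n^{s'}$ carried along: substitute the power series of $f$ into the sum over $n$, exchange the two summations, and recognise the inner sum over $n$ as a value of $g$. The only point requiring real care is the legitimacy of the interchange, i.e. the absolute convergence of the resulting double series.

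First I would write, for $|z|<R$,
$$\suny \frac{b_n}{n^{s'}}\, f\!\left(\frac{z}{n^s}\right) = \suny \frac{b_n}{n^{s'}} \sum_{k=1}^\infty a_k \frac{z^k}{n^{ks}} = \suny \sum_{k=1}^\infty \frac{a_k b_n z^k}{n^{ks+s'}},$$
the expansion of $f$ being legitimate because $|z/n^s| = |z|\,n^{-\RE{s}} \le |z| < R$ for every $n\ge1$ (using $\RE{s}\ge1$). For the absolute convergence the decisive fact is that $f$ vanishes at $0$, so its series starts at $k=1$; this is what supplies the extra power of $n$ one needs to reach the abscissa of absolute convergence. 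Fixing $\rho$ with $|z|<\rho<R$ and using $|a_k|\le C\rho^{-k}$ (valid since $\sum|a_k|\rho^k<\infty$), one gets, for each $n\ge1$,
$$\sum_{k=1}^\infty |a_k|\,\frac{|z|^k}{n^{k\RE{s}}} \le C\sum_{k=1}^\infty\left(\frac{|z|}{\rho\,n^{\RE{s}}}\right)^{k} \le \frac{C}{1-|z|/\rho}\cdot\frac{|z|}{\rho}\cdot\frac{1}{n^{\RE{s}}},$$
since $\RE{s}\ge1$ makes the ratio $|z|/(\rho\,n^{\RE{s}})$ at most $|z|/\rho<1$. Multiplying by $|b_n|\,n^{-\RE{s'}}$ and summing over $n$ bounds the double series by a constant times $\suny |b_n|/n^{\RE{s}+\RE{s'}}$, which is finite because $\RE{s+s'}>\sigma_a$ and $\sigma_a$ is the abscissa of \emph{absolute} convergence of $g$.

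With the double series absolutely convergent I would exchange the order of summation to obtain
$$\suny \frac{b_n}{n^{s'}}\, f\!\left(\frac{z}{n^s}\right) = \sum_{k=1}^\infty a_k z^k \suny \frac{b_n}{n^{ks+s'}} = \sum_{k=1}^\infty a_k\, g(ks+s')\, z^k,$$
where the last step uses $\RE{ks+s'}=k\RE{s}+\RE{s'}\ge\RE{s}+\RE{s'}>\sigma_a$ for all $k\ge1$, so each $g(ks+s')$ is represented by its defining series. This is the asserted identity. The main obstacle is precisely the convergence bookkeeping: one must resist bounding $|b_n|\,n^{-\RE{s'}}$ on its own (that sum may diverge when $\RE{s'}\le\sigma_a$) and must instead absorb the factor $n^{-\RE{s}}$ coming from the vanishing of $f$ at the origin, which is exactly where the hypothesis $\RE{s}\ge1$ (indeed already $\RE{s}>0$) is used.
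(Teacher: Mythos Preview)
Your proof is correct and follows exactly the approach the paper intends: the paper gives no explicit proof here, merely remarking that the result arises by combining Theorems~\ref{thdirichlet} and~\ref{thserie}, and your argument is precisely that combination---expand $f$ into its power series, interchange summations, and identify the inner sum as $g(ks+s')$. Your careful convergence bookkeeping (in particular the observation that the $k\ge1$ constraint supplies the factor $n^{-\RE{s}}$ needed to reach the abscissa of absolute convergence) is more detailed than anything the paper provides.
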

\begin{theo} \label{thserie2}
Let the analytical function $f(s)$ be defined in the disc $D(0,R)$, $R>0$, by its power series~:
$$
	f(s) = \sum_{k=1}^\infty a_k s^k,
$$
and zero in $0$.

Let $(\lambda_n)$ a sequence of positive real numbers whose are increasing up to infinity 
 such that the sum $$\suny |\exp(-\lambda_n s)|$$ converges for $\RE{s}>\sigma_a$.
 
Let $$D(s)=\suny \exp(-\lambda_n s)$$ and a complex $s$ such that $\RE{s} > \sigma_a$.

For $|z| <R$,
\begin{equation}
	\suny f\left(ze^{-\lambda_n s}\right) =  \sum_{k=1}^\infty {a_k D(ks) z^{k}}.
\end{equation}
\end{theo}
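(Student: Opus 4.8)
The plan is to follow verbatim the argument used for Theorem~\ref{thserie}, replacing the Dirichlet monomials $n^{-s}$ by the generalized Dirichlet exponentials $e^{-\lambda_n s}$ and the Riemann zeta function by $D$; the identity that makes everything work is simply $\left(e^{-\lambda_n s}\right)^k = e^{-\lambda_n (ks)}$, the exact analogue of $\left(n^{-s}\right)^k = n^{-ks}$. First I would record a harmless preliminary remark. Since $\lambda_n\to+\infty$, the quantity $|e^{-\lambda_n s}| = e^{-\lambda_n \RE{s}}$ tends to $0$ only when $\RE{s}>0$; hence the very hypothesis that $\suny |e^{-\lambda_n s}|$ converge for $\RE{s}>\sigma_a$ forces $\sigma_a\ge 0$, so the fixed $s$ under consideration has $\RE{s}>\sigma_a\ge 0$. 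Combined with $0<\lambda_1\le\lambda_n$ this gives $|e^{-\lambda_n s}|\le e^{-\lambda_1\RE{s}}<1$ for every $n$, whence $|z e^{-\lambda_n s}|\le |z|\,e^{-\lambda_1\RE{s}}<|z|<R$: each term $f(z e^{-\lambda_n s})$ is well defined by the power series of $f$, and moreover $\RE{ks}=k\,\RE{s}\ge\RE{s}>\sigma_a$ for every integer $k\ge 1$, so each value $D(ks)$ is well defined as well.

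The computation itself is then just
$$
\suny f\left(z e^{-\lambda_n s}\right)
 = \suny \sum_{k=1}^\infty a_k \left(z e^{-\lambda_n s}\right)^k
 = \sum_{k=1}^\infty a_k z^k \suny e^{-\lambda_n (ks)}
 = \sum_{k=1}^\infty a_k D(ks)\, z^k .
$$

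The one step that actually needs an argument --- and it is the only obstacle, a routine one --- is the interchange of the two summations in the middle equality. I would justify it by absolute convergence. Put $F(r)=\sum_{k\ge 1}|a_k|\,r^k$; this converges for $0\le r<R$ because $f$ is analytic on $D(0,R)$, so its Taylor series has convergence radius at least $R$. Writing $\rho=|z|\,e^{-\lambda_1\RE{s}}<R$, for every $n$ we have $|z|\,e^{-\lambda_n\RE{s}}\le\rho$, hence
$$
\sum_{k=1}^\infty |a_k|\,|z|^k e^{-\lambda_n k\RE{s}}
 = F\!\left(|z|\,e^{-\lambda_n\RE{s}}\right)
 \le C\,|z|\,e^{-\lambda_n\RE{s}},
 \qquad C:=\sum_{k\ge 1}|a_k|\,\rho^{k-1}<\infty .
$$
Summing over $n$ gives $\suny F\!\left(|z|\,e^{-\lambda_n\RE{s}}\right)\le C\,|z|\,D(\RE{s})<\infty$, since $\RE{s}>\sigma_a$. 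Thus the double family $\bigl(a_k (z e^{-\lambda_n s})^k\bigr)_{n,k}$ is absolutely summable, Fubini's theorem licenses the rearrangement, and the same bound shows that the right-hand series $\sum_{k\ge1}a_k D(ks)z^k$ converges for $|z|<R$. This completes the proof; as a check, the special case $\lambda_n=\ln n$, $D=\zeta$, $\sigma_a=1$ recovers the first formula of Theorem~\ref{thserie}.
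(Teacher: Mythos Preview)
Your proof is correct. The paper actually states this theorem without proof, relying on the reader to see that it is the same three-line computation as in Theorem~\ref{thserie} with $n^{-s}$ replaced by $e^{-\lambda_n s}$; you carry out exactly that computation and, in addition, supply the absolute-convergence estimate that justifies the interchange of summations (which the paper's proof of Theorem~\ref{thserie} leaves implicit). So your approach is the intended one, only more carefully written.
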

\begin{theo} \label{thsuite} Let $(b_n)$ a sequence of complex numbers whose moduli are increasing up to infinity 
and $(a_n)$ an infinite sequence of complex numbers such that the sum $\suny |{a_n}/{b_n}|$ converges.

Then 
\begin{equation} \label{thsuitef}
	\suny{\frac{a_n}{b_n-z}} = \suzy{\left(\suny \frac{a_n}{b_n^{k+1}}\right) z^k}
\end{equation}
for $|z| \leq |b_1|$, $z \neq b_1$.

The power series 
$$
	\suzy{\left(\suny \frac{a_n}{b_n^{k+1}}\right) z^k}
$$
defines a meromorphic function which can be extended to the whole complex plane,
except for simple poles at $z=b_n$ ($n \in \Nint$) with residue $-a_n$ when $a_n \neq 0$.
\end{theo}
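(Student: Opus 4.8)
The plan is to follow, in this general setting, the same three-move scheme used for Theorem~\ref{thdirichlet}: obtain the power-series identity on the open disc $D(0,|b_1|)$ from a geometric expansion together with an interchange of summations, push it to the boundary circle with Abel's Lemma~\ref{Abellem}, and then read off the meromorphic continuation directly from the partial-fraction series $\suny a_n/(b_n-z)$.

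First I would fix $r$ with $0<r<|b_1|$ and take $|z|\le r$. Since the moduli are increasing we have $|z/b_n|\le r/|b_1|<1$ for every $n$, so each summand expands as $a_n/(b_n-z)=(a_n/b_n)\sum_{k\ge0}(z/b_n)^k=\sum_{k\ge0}a_n b_n^{-k-1}z^k$, and the doubly-indexed family $(a_n b_n^{-k-1}z^k)_{n\ge1,\,k\ge0}$ is absolutely summable, being dominated by $\bigl(\suny|a_n/b_n|\bigr)\suzy(r/|b_1|)^k<\infty$. Hence the two summations may be exchanged and $\suny a_n/(b_n-z)=\suzy c_k z^k$ with $c_k:=\suny a_n/b_n^{k+1}$; as $r<|b_1|$ was arbitrary, this holds throughout $D(0,|b_1|)$. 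Moreover $|c_k|\le|b_1|^{-k}\suny|a_n/b_n|$, so $\limsup_k|c_k|^{1/k}\le|b_1|^{-1}$ and the radius of convergence of $g(z):=\suzy c_k z^k$ is at least $|b_1|$ by Cauchy--Hadamard. For a point $z_0$ with $|z_0|=|b_1|$ and $z_0\neq b_1$ one then extends the identity by applying Abel's Lemma~\ref{Abellem} to $w\mapsto g(z_0 w)$ at $w=1$, exactly as in the proof of Theorem~\ref{thdirichlet}, using in addition that $t\mapsto\suny a_n/(b_n-tz_0)$ converges as $t\to1^-$ because $|b_n-tz_0|$ stays bounded below on the segment.

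For the meromorphic continuation I would set $F(z):=\suny a_n/(b_n-z)$ and show it is holomorphic on $\Cplx\setminus\{b_n:n\in\Nint\}$. Given a compact $K$ disjoint from $\{b_n\}$, choose $R$ with $K\subset\overline{D(0,R)}$ and $N$ with $|b_n|>2R$ for $n>N$; then for $z\in K$ and $n>N$ one has $|a_n/(b_n-z)|<2|a_n|/|b_n|$, so the tail converges uniformly on $K$ while the finitely many terms $n\le N$ are holomorphic there, and a uniform limit of holomorphic functions is holomorphic. Near a given $b_N$, writing $F(z)=a_N/(b_N-z)+\sum_{n\neq N}a_n/(b_n-z)$ with the second sum holomorphic in a neighbourhood of $b_N$ (apply the previous estimate on a small disc about $b_N$ missing the other $b_n$), one gets $\lim_{z\to b_N}(z-b_N)F(z)=-a_N$; hence $b_N$ is a simple pole of $F$ with residue $-a_N$ if $a_N\neq0$, and a removable singularity otherwise. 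Since $F=g$ on $D(0,|b_1|)$ by the first step, $F$ is the meromorphic continuation of $g$ to $\Cplx$ — unique by the identity theorem — with exactly the poles just described.

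I expect the main obstacle to be this last part: establishing the locally uniform convergence of $\suny a_n/(b_n-z)$ away from $\{b_n\}$ — the one place where both hypotheses, $|b_n|\to\infty$ and $\suny|a_n/b_n|<\infty$, are genuinely used — and then invoking the identity theorem to be sure the analytic function so produced really is the continuation of the power series and not merely a function agreeing with it on a disc. A minor point to keep in mind is that ``the moduli are increasing'' must be read so that no $b_n$ other than $b_1$ sits on the circle $|z|=|b_1|$ (otherwise all such $b_n$, not just $b_1$, must be excluded in the boundary statement); and on that circle the right-hand side is to be interpreted through the Abel limit supplied by Lemma~\ref{Abellem}, since the power series need not converge there in the ordinary sense.
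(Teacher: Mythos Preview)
The paper gives no proof for Theorem~\ref{thsuite}: it is stated and immediately followed by its corollary, the argument being left to the reader by analogy with the proof of Theorem~\ref{thdirichlet}. Your proposal is precisely that analogy, carried out correctly and in fact with more care than the paper's own proof of Theorem~\ref{thdirichlet} --- you supply the Cauchy--Hadamard bound on the radius, the locally-uniform-convergence argument for holomorphy of $F$ on $\Cplx\setminus\{b_n\}$, and the identity-theorem step, none of which the paper spells out. Your closing caveats (that several $b_n$ could lie on the circle $|z|=|b_1|$ if ``increasing'' is not strict, and that the boundary identity is really an Abel-limit statement since $\sum c_k z_0^k$ need not converge there) are apt and point to genuine imprecisions in the paper's formulation.
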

\begin{coro} Under the previous hypothesis, for an integer $m \ge 0$, we have
\begin{equation} 
        \suny{\frac{a_n}{(b_n-z)^{m+1}}} = \suky{\left(\suny \frac{a_n}{b_n^{k+1}}\right)\frac{k!}{m!(k-m)!} z^{k-m}}
\end{equation}
\end{coro}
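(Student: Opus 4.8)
The plan is to obtain this corollary from Theorem~\ref{thsuite} in the same way the analogous corollary to Theorem~\ref{thdirichlet} was obtained from~(\ref{thdirichletf}): differentiate the identity~(\ref{thsuitef}) exactly $m$ times with respect to $z$ and divide by $m!$. I would first work inside the open disc $D(0,|b_1|)$, where (\ref{thsuitef}) holds. On the right-hand side nothing is in doubt: a power series is infinitely differentiable inside its disc of convergence and may be differentiated term by term, and since $\frac{d^m}{dz^m}z^k=\frac{k!}{(k-m)!}z^{k-m}$ for $k\ge m$ while the terms with $k<m$ vanish, the $m$-th derivative of $\suzy(\suny a_n/b_n^{k+1})z^k$ is exactly $\suky(\suny a_n/b_n^{k+1})\frac{k!}{(k-m)!}z^{k-m}$.

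The only real point to check is that the left-hand side may also be differentiated termwise. For any $r<|b_1|$ and $|z|\le r$ one has $|b_n-z|\ge|b_n|-r$, which is $\ge|b_1|-r>0$ for every $n$ (the moduli $|b_n|$ being increasing) and $\ge|b_n|/2$ once $n$ is large; hence $|a_n/(b_n-z)|$ is bounded term by term by a constant depending only on $r$ times $|a_n/b_n|$. Since $\suny|a_n/b_n|$ converges by hypothesis, the M-test gives normal convergence of $\suny a_n/(b_n-z)$ on each such disc, hence local uniform convergence on $D(0,|b_1|)$; so its sum is holomorphic there and, by the Weierstrass convergence theorem, $\frac{d^m}{dz^m}\suny\frac{a_n}{b_n-z}=\suny\frac{d^m}{dz^m}\frac{a_n}{b_n-z}=m!\suny\frac{a_n}{(b_n-z)^{m+1}}$, where I used $\frac{d^m}{dz^m}(b_n-z)^{-1}=m!(b_n-z)^{-m-1}$. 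Equating the two $m$-th derivatives inside $D(0,|b_1|)$ and dividing by $m!$ yields the announced formula there.

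The step I expect to be the genuine subtlety is not this computation but the domain: Theorem~\ref{thsuite} reaches $|z|\le|b_1|$, $z\ne b_1$, through Abel's Lemma~\ref{Abellem}, but Abel's lemma controls only the boundary \emph{value} of a power series, not its derivatives, so it does not carry over to the corollary, and the natural statement is on the open disc, with the left-hand side understood outside it by meromorphic continuation. To make that explicit I would rerun the same M-test estimate on an arbitrary compact subset $K$ of $\Cplx\setminus\{b_n:n\in\Nint\}$, where $\inf_{z\in K}|b_n-z|$ is positive and tends to infinity: this shows $\suny a_n/(b_n-z)^{m+1}$ converges normally on $K$, so it defines a holomorphic function on all of $\Cplx\setminus\{b_n\}$; being equal on $D(0,|b_1|)$ to $1/m!$ times the $m$-th derivative of the meromorphic function of Theorem~\ref{thsuite}, it is that function's continuation, and the right-hand series is its Taylor expansion at the origin.
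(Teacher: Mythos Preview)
Your proof is correct and follows exactly the paper's approach: the paper's entire proof is the single sentence ``This is the derivative of order $m$ of the formula (\ref{thsuitef}) with respect to $z$.'' You have supplied the analytic justifications (normal convergence, Weierstrass) and the domain discussion that the paper leaves implicit, but the underlying idea is identical.
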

\begin{proof} This is the derivative of order $m$ of the formula (\ref{thsuitef}) with respect to $z$.\end{proof}

\section{The inverse problem}

\section{Applications of the previous formulas}

The notation $(C,k)$ means the Cesàro summation method of order $k$ \cite[p. 96]{Hardy:1949}.

\subsection{} First formula (\ref{n2plusz}), $z=1$ in the Cesàro's sense \cite[p. 205]{Apostol:1974}
$$
	\suny \frac1{n^2+1}=\sum_{k=1}^\infty(-1)^{k+1}\zeta(2k)\qquad(C,1).
$$
\subsection{} Second formula (\ref{n3plusz}), $z=1$ in the Cesàro's sense
$$
	\suny \frac1{n^3+1}=\sum_{k=1}^\infty(-1)^{k+1}\zeta(3k)\qquad(C,1).
$$
\subsection{} Partial derivative of the first formula (\ref{n2plusz}) with respect to $z$,
$z=1$ in the Cesàro's sense
$$
	\suny \frac1{(n^2+1)^2}=\sum_{k=1}^\infty(-1)^{k+1}k\zeta(2k)\qquad(C,2).
$$
\subsection{} Theorem \ref{thserie} for $f(s)=\ln(1+s)$, $\RE{s}>1$ then limit $z \to 1^-$
$$
	\suny \ln\left(1+\frac1{n^s}\right)=\sum_{k=1}^\infty(-1)^{k}\frac{\zeta(ks)}{k}.
$$
\subsection{} Theorem \ref{thserie} for $f(s)=\exp(s)-1$
$$
	\suny \left[\exp\left(\frac1{n^2}\right)-1\right]=\sum_{k=1}^\infty \frac{\zeta(2k)}{k!}.
$$
\subsection{} Theorem \ref{thserie} for $f(z)=\sin(z)$, $\RE{s}>1$
$$
	\suny \sin\left(\frac{z}{n^s}\right) = \suzy (-1)^k \frac{z^{2k+1}}{(2k+1)!}\zeta\left(2ks+s\right).
$$
\subsection{} Theorem \ref{thdirichlet} for $f(s)=1/\zeta(s)$.
$$
        \suny \frac{\mu(n)}{n^2+z} =  \suzy { \frac{(-1)^k}{\zeta(2k+2)} z^{k}}.
$$
\subsection{} Theorem \ref{thdirichlet} for $f(s)=\zeta'(s)/\zeta(s)$.
$$
        \suny \frac{\Lambda(n)}{n^2+z} =  \suzy {(-1)^{k+1} \frac{\zeta'}{\zeta}(2k+2) z^{k}}.
$$
where $$\Lambda(n)=\begin{cases}
		\displaystyle \ln(p) & \text{if } n=p^m,\quad  p\text{ prime,}
		\\
		\displaystyle 0 & \text{otherwise.}
					\end{cases}$$
\subsection{} Theorem \ref{thdirichlet} for $f(s)=\zeta(s-1)/\zeta(s)$.
$$
        \suny \frac{\varphi(n)}{n^3+z} =  \suzy {(-1)^{k+1} \frac{\zeta(3k+2)}{\zeta(3k+3)} z^{k}}.
$$
where $\varphi(n)$ is the Euler's totient function.

So
$$
        \suny \frac{\varphi(n)}{n^3+1} =  \suzy {(-1)^{k+1} \frac{\zeta(3k+2)}{\zeta(3k+3)}}.
$$

\subsection{} We can use the theorems \ref{thdirichlet}, \ref{thdirichletserie} and the two corollaries with the L-Dirichlet functions.
With $a_n=\chi(n)$, we have by the theorem \ref{thdirichlet}
$$\sum_{n=1}^\infty \frac{\chi(n)}{n^s+z} = \sum_{k=1}^\infty (-1)^k L(ks+s,\chi)z^k.$$
and by the theorem \ref{thdirichletserie}
$$\suny \frac{\chi(n)}{n^{s'}}f\left(\frac{z}{n^s}\right) =  \sum_{k=1}^\infty {a_k  L(ks+s',\chi)} z^{k}.$$
For example, with the Catalan's zeta-function
$$\beta( s)= \sum_{n=0}^\infty \frac{(-1)^n}{(2n+1)^s},$$
we obtain $$\sum_{n=0}^\infty \frac{(-1)^n}{(2n+1)^s+z} = \sum_{k=1}^\infty (-1)^k\beta(ks+s)z^k.$$

So
$$\sum_{n=1}^\infty \frac{(-1)^n}{(2n+1)^s+z} = \sum_{k=1}^\infty (-1)^k \left[\beta(ks+s)-1\right]z^k.$$
\subsection{} From 
$$
	\suny \frac1{4n^2-1} = \frac12,
$$
and $z=-1/4$ in the first formula (\ref{n2plusz}), we have
$$
	\suny \frac{\zeta(2n)}{4^n} = \frac12.
$$

\subsection{} With $z=-1/16$, the first formula (\ref{n2plusz}) gives
$$
	\suny \frac{\zeta(2n)}{16^n} = \frac1{2} - \frac{\pi}{8}.
$$

\subsection{} Some similar formulas are already known, for example 
\cite[p. 691, \S5.1.27]{BrychkovMarichevPrudnikov:1998} (with the
sign of the fraction $\frac12$ corrected to be negative)
$$
	\suny \frac1{n^4+1}  = \suzy (-1)^k{\zeta(4k+4)}
					= -\frac12+\frac{\pi\sqrt{2}}{4}\frac{\sinh(\pi\sqrt{2})
						+\sin(\pi\sqrt{2})}{\cosh(\pi\sqrt{2})-\cos(\pi\sqrt{2})}.
$$
\subsection{} Theorem \ref{thsuite} for $a_n=1$, $b_n=n^2+n$, $z=1$

$$\suny{\frac{1}{n^2+n-1}} =1+\frac{\sqrt{5}}{5}\pi \tan \left(\frac{\pi \sqrt{5}}{2}\right)= \suzy{\left(\suny \frac{1}{(n^2+n)^{k+1}}\right)}$$

\section{Code for Maple}

For theorem \ref{thserie}
\begin{verbatim}
#Definition of the function
f := x -> exp(x);

#The function must be 0 for x=0
g:= x -> f(x ) - f( 0);

#We calculate the coefficient of the entire series
a := n->eval(diff(g(x), x$n), x = 0)/factorial(n);
 
#Values of s and z
z := 1/3; s := 2;
 
#Left hand side
s1 := sum(g(z/n^s), n = 1 .. infinity);            evalf(s1, 20);
 
#Right hand side
ss2 := sum(a(k)*Zeta(k*s)*z^k, k = 1 .. infinity); evalf(ss2, 20);
\end{verbatim}
 
\bigskip\noindent Acknowledgements.
The author would like to thank Jacques Gélinas for his help with this English translation
of the original French version of the paper and the bibliography.

\vfill
\end{document}